\newcommand{\tp}{{\sf T}}
\newcommand{\nulltype}{\varnothing}
\newcommand{\downop}[1]{[\kern-1.65pt[#1]\kern-1.65pt]}
\newcommand{\R}{\mathbb{R}}
\newcommand{\Acal}{\mathcal{A}}
\newcommand{\Ccal}{\mathcal{C}}
\newcommand{\Fcal}{\mathcal{F}}
\newcommand{\Gcal}{\mathcal{G}}
\newcommand{\Hcal}{\mathcal{H}}
\newcommand{\Kcal}{\mathcal{K}}
\newcommand{\Scal}{\mathcal{S}}
\newcommand{\Tcal}{\mathcal{T}}
\newtheorem{theorem}{Theorem}
\newtheorem{lemma}[theorem]{Lemma}
\DeclareMathOperator{\tr}{tr}
\DeclareMathOperator{\Hom}{Hom}
\DeclareMathOperator{\exparam}{ex}
\DeclareMathOperator{\im}{Im}
\newcommand{\nonedge}{\raise1pt\hbox{\includegraphics{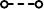}}}
\newcommand{\edge}{\raise1pt\hbox{\includegraphics{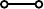}}}
\newcommand{\emptythree}{\lower3pt\hbox{\includegraphics{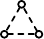}}}
\newcommand{\oneedge}{\lower3pt\hbox{\includegraphics{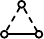}}}
\newcommand{\twoclaw}{\lower3pt\hbox{\includegraphics{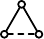}}}
\newcommand{\triangleflag}{\lower3pt\hbox{\includegraphics{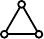}}}
\newcommand{\bnonedge}{\raise1pt\hbox{\includegraphics{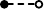}}}
\newcommand{\bedge}{\raise1pt\hbox{\includegraphics{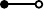}}}
\newcommand{\bempty}{\lower3pt\hbox{\includegraphics{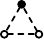}}}
\newcommand{\bonezero}{\lower3pt\hbox{\includegraphics{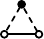}}}
\newcommand{\boneone}{\lower3pt\hbox{\includegraphics{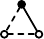}}}
\newcommand{\btwoclaw}{\lower3pt\hbox{\includegraphics{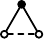}}}
\newcommand{\btwoone}{\lower3pt\hbox{\includegraphics{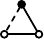}}}
\def\cdotwc{{\,\cdot\,}}
\def\forget#1{{\downarrow} #1}  
\def\expec{\mathop{\mathbb E}}
\newcommand{\textred}[1]{#1}
\title{Flag Algebras: A First Glance}
\author{Marcel K. de Carli Silva}
\author{Fernando Mário de Oliveira Filho} 
\address{M.K. de Carli Silva and F.M. de Oliveira Filho, Instituto de
  Matemática e Estatística, Universidade de São Paulo, Rua do Matão,
  1010, 05508-090 São Paulo/SP, Brazil.}
 \email{\{mksilva, fmario\}@ime.usp.br}
\author{Cristiane Maria Sato}
\address{C.M. Sato, Centro de Matemática, Computação e Cognição,
  Universidade Federal do ABC, Rua Abolição, S/N, 09210-180 Santo
  André/SP, Brazil.}
\email{c.sato@ufabc.edu.br}
\date{\today}
\begin{document}

\begin{abstract} 
  The theory of flag algebras, introduced by Razborov in~2007, has
  opened the way to a systematic approach to the development of
  computer-assisted proofs in extremal combinatorics. It makes it
  possible to derive bounds for parameters in extremal combinatorics
  with the help of a computer, in a semi-automated manner.  This
  article describes the main points of the theory in a complete way,
  using Mantel's theorem as a guiding example.
\end{abstract}

\maketitle

\markboth{M.K. de Carli Silva, F.M. de Oliveira Filho, and C.M.
  Sato}{Flag Algebras: A First Glance}


\section{Introduction}
\label{sec:intro}

Mantel's theorem, perhaps the first result in extremal graph theory,
was motivated by a problem proposed by W.~Mantel in an issue of the
journal \textit{Wiskundige Opgaven}, published by the
KWG~\cite{Mantel1910}:

\begin{quote}
\begin{center}
Vraagstuk XXVIII
\end{center}

\textbf{K 13 a.} Er zijn eenige punten gegeven waarvan geen vier in
een zelfde vlak liggen. Hoeveel rechten kan men hoogstens tusschen die
punten trekken zonder driehoeken te
vormen?\quad(W.~Mantel.)\footnote{Problem~XXVIII: Given are some
  points, no four of which lie on the same plane. How many lines at
  most can one draw between the points without forming triangles?}
\end{quote}

In the language of graph theory, Mantel's problem asks for the maximum
number of edges that a graph without triangles can have: the
restriction that no four points lie on the same plane is there exactly
to ensure that only triangles between the given points can be formed
when lines are drawn. 

A triangle-free graph on~$n$ vertices can be constructed as follows:
divide the vertex set into two parts of~$\lfloor n/2 \rfloor$
and~$\lceil n/2\rceil$ vertices each and add all edges between the
parts. The resulting graph is bipartite, and hence in particular
triangle-free, and has~$\lfloor n^2 / 4 \rfloor$ edges. Mantel's
theorem states that this is an \textit{extremal example}, the best one
can do: \textit{every triangle-free graph on~$n$ vertices has at
  most~$\lfloor n^2 / 4 \rfloor$ edges}.

This answer to Mantel's problem appeared in the same issue of
\textit{Wiskundige Opgaven}. There it is mentioned that solutions were
provided by Mantel and several others; a proof by W.A. Wythoff
(1865--1939), a former student of D.J.~Korteweg (1848--1941), is
included.

The theory of flag algebras allows us to computationally tackle
extremal graph theory problems such as Mantel's problem and to obtain
results such as Mantel's theorem. To understand how this is done, we
first need to define exactly which extremal problems we consider.

The \textit{size} of a graph~$G$ is its number of vertices~$|V(G)|$
and is denoted by~$|G|$. For~$U \subseteq V(G)$, we denote by~$G[U]$
the subgraph of~$G$ \textit{induced} by~$U$, that is, the subgraph
of~$G$ with vertex set~$U$ and all the edges of~$G$ between vertices
of~$U$.  For graphs~$F$ and~$G$, let~$p(F; G)$ be the probability that
a set~$U \subseteq V(G)$ with~$|U| = |F|$, chosen uniformly at random,
is such that~$G[U]$ is isomorphic to~$F$. We say that~$p(F; G)$ is the
\textit{density} of~$F$ in~$G$. In other words, if~$c(F; G)$ is the
number of times~$F$ occurs as an induced subgraph of~$G$, then
\[
p(F; G) = c(F; G) \binom{|G|}{|F|}^{-1}.
\]

Let~$\Hcal$ be a collection of graphs. A graph~$G$ is
\textit{$\Hcal$-free} if no induced subgraph of~$G$ is isomorphic to a
graph in~$\Hcal$. A fundamental problem in extremal graph theory is to
determine, for a given graph~$C$, the maximum asymptotic density
of~$C$ in $\Hcal$-free graphs
\begin{equation}
\label{eq:exparam}
\exparam(C, \Hcal) = \sup_{(G_k)_{k \geq 0}} \limsup_{k\to\infty} p(C;
G_k),
\end{equation}
where the supremum is taken over all sequences~$(G_k)_{k \geq 0}$ of
$\Hcal$-free graphs that are \textit{increasing}, i.e., with
$(|G_k|)_{k \geq 0}$ is strictly increasing.

Mantel's theorem shows
that~$\exparam(\edge, \{\triangleflag\}) \leq 1/2$. Together with the
extremal example described above, we actually
have~$\exparam(\edge, \{\triangleflag\}) = 1/2$.

Let~$\Gcal$ be the set of all finite $\Hcal$-free graphs taken up to
isomorphism. An increasing sequence~$(G_k)_{k \geq 0}$ is
\textit{convergent} if~$\lim_{k\to\infty} p(F; G_k)$ exists for
every~$F \in \Gcal$. Every increasing sequence of $\Hcal$-free graphs
has a convergent subsequence. Indeed, densities are numbers
in~$[0,1]$, so for~$k \geq 0$ the function~$F \mapsto p(F; G_k)$ can
be identified with a point in~$[0,1]^\Gcal$, which is a compact space
by Tychonoff's theorem.

In~\eqref{eq:exparam} we may therefore restrict ourselves to
convergent sequences and this allows us to work with their
limits. Call~$\phi\colon \Gcal \to \R$ a \textit{limit functional} if
there is a convergent sequence~$(G_k)_{k \geq 0}$ of $\Hcal$-free
graphs such that
\[
\phi(F) = \lim_{k \to \infty} p(F; G_k)
\]
for all~$F \in \Gcal$ and let~$\Phi$ denote the set of all limit
functionals.  \textit{Then computing~$\exparam(C, \Hcal)$ is the same as
solving an optimization problem over~$\Phi$:}
\begin{equation}
\label{eq:exparam-opt}
\exparam(C, \Hcal) = \sup\{\,\phi(C) : \phi \in \Phi\,\}.
\end{equation}

This is just a rewording of the original problem, but it emphasizes
that the difficulty here lies in understanding~$\Phi$. This set may be
very complex and computationally intractable, but to get an upper
bound for~$\exparam(C, \Hcal)$ we do not need to work with~$\Phi$.
Instead, we may look for a nice \textit{relaxation} of~$\Phi$, that
is, a set~$\Phi' \supseteq \Phi$ for which we can solve the
optimization problem. A first and obvious relaxation would be to
take~$\Phi' = [0,1]^\Gcal$. Solving the optimization problem is then
trivial, but we always get the bound~$\exparam(C, \Hcal) \leq 1$. The
difficulty lies in managing the trade-off between the quality of the
relaxation and its tractability.

The theory of flag algebras~\cite{Razborov2007}, developed by the
Russian mathematician Alexander Razborov, winner of the Nevanlinna
Prize in~1990 and the Gödel Prize in~2007, gives us
computationally-tractable relaxations of~$\Phi$ that have displayed
good quality in practice. We may then use the computer to solve the
corresponding optimization problems, thus obtaining upper bounds
for~$\exparam(C, \Hcal)$ that are often tight. Perhaps the most
attractive feature in the theory is that the whole process is
more-or-less automatic: obtaining the relaxation and solving the
corresponding problems is basically a computational matter. So the
theory of flag algebras allows us to harness computational power and
apply it to problems in extremal combinatorics; it can be understood
as part of the growing trend for the use of computers in mathematics.

Razborov credits Bondy~\cite{Bondy1997} with a predecessor of the
theory of flag algebras. Bondy applies counting techniques to the
Caccetta-Häggkvist conjecture\footnote{The Cacceta-Häggkvist
  conjecture states that every simple directed graph on~$n$ vertices
  with outdegree at least~$r$ has a cycle with length at
  most~$\lceil n / r \rceil$.}  and illustrates his idea on Mantel's
theorem. Here is a proof
that~$\exparam(\edge, \{\triangleflag\}) \leq 1/2$ that is a rewording
of the proof by Bondy in terms of densities and limit
functionals. This proof is a first glance into the theory of flag
algebras; in it we will derive by hand some constraints on limit
functionals of sequences of triangle-free graphs and then give an
explicit simple relaxation of~$\Phi$ from which Mantel's theorem will
follow.

A triangle-free graph may have three different graphs on three
vertices as induced subgraphs: the empty graph~$\emptythree$, the
graph with one edge~$\oneedge$, and the graph with two
edges~$\twoclaw$. (Nonedges are represented by dashed lines.) Let~$G$
be a triangle-free graph. Every edge of~$G$ belongs to~$|G| - 2$
induced subgraphs with three vertices, whence
\[
p(\oneedge; G) + 2 p(\twoclaw; G) = 3 p(\edge; G).
\]
This is valid for every triangle-free graph~$G$, hence also for a
limit functional~$\phi$:
\[
\phi(\oneedge) + 2 \phi(\twoclaw) = 3\phi(\edge).
\]
We have our first constraint satisfied for all~$\phi \in \Phi$.

A second constraint comes from the identity
\[
p(\twoclaw; G) = \binom{|G|}{3}^{-1} \sum_{v \in V(G)} \binom{d(v)}{2},
\]
where~$d(v)$ is the degree of vertex~$v$. To rewrite the right-hand
side above, we need to extend the definition of the density
function~$p$ to partially-labeled graphs. Say~$F$ and~$G$ are graphs each
having a special vertex labeled~$1$, and let~$x_1$ be the vertex
of~$G$ labeled~$1$. Let~$p(F; G)$ be the probability that a set~$U
\subseteq V(G) \setminus \{ x_1 \}$ with~$|U| = |F| - 1$, chosen
uniformly at random, is such that~$G[U \cup \{ x_1 \}]$ is isomorphic
to~$F$ via a label-preserving isomorphism, that is, an isomorphism
that takes the labeled vertex of~$F$ to the labeled vertex of~$G$.

For~$v \in V(G)$, denote by~$G^v$ the labeled graph obtained from~$G$ by
labeling vertex~$v$ with label~$1$. Let~$\btwoclaw$ denote the
labeled graph obtained from~$\twoclaw$ by labeling the vertex of
degree two with label~$1$; similarly for other graphs the solid
vertex will be the labeled vertex. Then for a triangle-free graph~$G$
we have
\begin{equation}
\label{eq:intro_label}
\begin{split}
p(\twoclaw; G)
&=\binom{|G|}{3}^{-1} \sum_{v \in V(G)} \binom{d(v)}{2}
\\
&= \binom{|G|}{3}^{-1} \sum_{v \in V(G)} p(\btwoclaw; G^v) \binom{|G| -
  1}{2}\\
&= \frac{3}{|G|} \sum_{v \in V(G)} p(\btwoclaw; G^v).
\end{split}
\end{equation}

Now comes a key observation. As the size of~$G$ goes to infinity,
$p(\btwoclaw; G^v)$ goes
to~$p(\bedge; G^v)^2$. This is not hard to prove (do it!), but the
intuition should be clear: if~$G$ is very large, then choosing a subset
of~$V(G) \setminus \{v\}$ of size~$2$ uniformly at random is basically
the same as choosing two vertices in~$V(G) \setminus \{v\}$ independently
--- the probability of choosing the same vertex twice becomes
negligible as~$|G|$ grows larger.

So let~$\phi$ be the limit functional of a convergent
sequence~$(G_k)_{k \geq 0}$ of triangle-free graphs. Then
\begin{equation}
\label{eq:phi-first-fact}
\begin{split}
\phi(\twoclaw) = \lim_{k \to \infty} p(\twoclaw; G_k)
&= \lim_{k\to\infty} \frac{3}{|G_k|} \sum_{v \in V(G_k)} p(\btwoclaw;
G_k^v)\\
&= \lim_{k\to\infty} \frac{3}{|G_k|} \sum_{v \in V(G_k)} p(\bedge;
G_k^v)^2.
\end{split}
\end{equation}
Now, for any triangle-free graph~$G$ the Cauchy-Schwarz inequality
gives
\[
\sum_{v \in V(G)} p(\bedge; G^v)^2 \geq \frac{1}{|G|} \biggl(\sum_{v
  \in V(G)} p(\bedge; G^v)\biggr)^2.
\]
Together with~\eqref{eq:phi-first-fact} and
\[
\sum_{v \in V(G)} p(\bedge; G^v) (|G| - 1) = 2 p(\edge; G)
\binom{|G|}{2}
\]
we get
\[
\phi(\twoclaw) \geq \lim_{k\to\infty} 3 p(\edge; G_k)^2 =
3\phi(\edge)^2.
\]

So every limit functional~$\phi$ satisfies the constraints
\begin{align*}
\phi(\oneedge) + 2 \phi(\twoclaw) &= 3\phi(\edge),\\
\phi(\twoclaw)&\geq 3\phi(\edge)^2.
\end{align*}
What do we get in~\eqref{eq:exparam-opt} if we optimize over the set~$\Phi'$ of
all~$\phi\colon \Gcal \to [0, 1]$ satisfying the constraints above?
Well, suppose~$\phi \in \Phi'$. Multiply the second
constraint by~$2$ and subtract it from the first to get
\[
\phi(\oneedge) \leq 3\phi(\edge) - 6\phi(\edge)^2.
\]
Since~$\phi(\oneedge) \geq 0$, we then have~$\phi(\edge) \leq 1/2$. So
the optimal value of~\eqref{eq:exparam-opt} with~$\Phi'$ instead
of~$\Phi$ is at most~$1/2$,
hence~$\exparam(\edge, \{\triangleflag\}) \leq 1/2$.

In the following sections the main points of Razborov's theory of flag
algebras are developed. Unless otherwise noted, every definition and
result presented here can be found in Razborov's original
paper~\cite{Razborov2007}.


\section{Types and flags}

In the introduction, we derived valid inequalities for~$\Phi$ by
combining densities of partially-labeled graphs as
in~\eqref{eq:intro_label}. In the next few sections we will develop
Razborov's theory of flag algebras, which automates this process.  The
discussion will be focused on families of graphs for concreteness,
though one of the most attractive features of the theory is that it
applies to a whole range of structures, including directed graphs,
hypergraphs, and permutations.

For an integer $k \geq 0$, write $[k] = \{1,\ldots,k\}$.  Fix a
family~$\Hcal$ of forbidden subgraphs. A {\it type\/} of {\it size
  $k$\/} is an $\Hcal$-free graph~$\sigma$ with~$V(\sigma) = [k]$. We
can think of it as a graph with vertices labeled with~$1,\dotsc,k$,
whereas we regard graphs as unlabeled. The empty type is denoted
by~$\nulltype$.

Let~$\sigma$ be a type of size~$k$ and~$F$ be a graph on at least~$k$
vertices. An {\it embedding\/} of~$\sigma$ into~$F$ is an injective
function~$\theta\colon [k] \to V(F)$ that defines an isomorphism
between~$\sigma$ and the subgraph of~$F$ induced by~$\im\theta$.

A {\it $\sigma$-flag\/} is a pair~$(F, \theta)$ where~$F$ is an
$\Hcal$-free graph and~$\theta$ is an embedding of~$\sigma$ into~$F$.
So a $\sigma$-flag is a partially-labeled graph that avoids~$\Hcal$
and whose labeled part is a copy of~$\sigma$. When the embedding
itself is not important, we will drop it, speaking simply
of the $\sigma$-flag~$F$.

The {\it labeled vertices\/} of~$(F, \theta)$ are the vertices in the
image of~$\theta$. Note that an $\nulltype$-flag is just \textred{an
  $\Hcal$-free graph}.  Any type~$\sigma$ of size~$k$ can also be seen
as the $\sigma$-flag~$(\sigma, \theta)$ where $\theta$ is the identity
on~$[k]$.

Isomorphism between $\sigma$-flags is defined just as for graphs, but
now the labels should also be preserved by the bijection. More
precisely, $\sigma$-flags~$(F, \theta)$ and~$(G, \eta)$ are {\it
  isomorphic\/} if there is a graph
isomorphism~$\rho\colon V(F) \to V(G)$ between~$F$ and~$G$ such
that~$\rho(\theta(i)) = \eta(i)$ for~$i = 1$,
\dots,~$|\sigma|$. Write~$(F, \theta) \simeq (G, \eta)$
when~$(F, \theta)$ and~$(G, \eta)$ are isomorphic, or
simply~$F \simeq G$ when the embeddings are not important. In the
introduction, this notion was used only for $\sigma$-flags where
$\sigma$ is the type of size~$1$. Figure~\ref{fig:someflags} shows
some flags of different types.

\begin{figure}
\def\flag#1{\includegraphics{flags-#1.pdf}}
\begin{center}
\begin{tabular}{ccccc}
\flag{26}&\flag{27}&\flag{28}&\flag{29}&\flag{30}\\[5mm]
&\flag{31}&\flag{32}&\flag{33}
\end{tabular}
\end{center}

\caption{Let~$\Hcal = \{\protect\triangleflag\}$. On the top row we
  have all $\nulltype$-flags of sizes~$2$ and~$3$, up to isomorphism
  (nonedges are shown as dashed lines);
  notice that the triangle itself is not a flag. On the bottom row we
  have all flags of type~$\sigma =
  \raise1pt\hbox{$\scriptscriptstyle 1$} \protect\edge
  \raise1pt\hbox{$\scriptscriptstyle 2$}$; notice that the last two of
  these flags are not isomorphic, since the isomorphism has to
  preserve the labels.}
\label{fig:someflags}
\end{figure}

For~$n \geq |\sigma|$, denote by~$\Fcal_n^\sigma$ the set of all
$\sigma$-flags of size~$n$, taken up to isomorphism; denote
by~$\Fcal^\sigma$ the set of all $\sigma$-flags taken up to
isomorphism. Note that the set~$\Gcal$ of all $\Hcal$-free graphs is
simply~$\Fcal^\nulltype$. A type~$\sigma$ is \textit{degenerate}
if~$\Fcal^\sigma$ is finite. If~$\sigma$ is nondegenerate,
then~$\Fcal_n^\sigma \neq \emptyset$ for all~$n \geq |\sigma|$. It is
easy to construct a family~$\Hcal$ for which there are degenerate
types: take for instance~$\Hcal$ as the set of all graphs with~1000
vertices containing at least one triangle. Then the triangle itself
is~$\Hcal$-free, and hence is a type, but there are no
$\triangleflag$-flags of size~$\geq 1000$.

\textit{From now on, we assume that all types are nondegenerate.} In
particular, every time a result about $\sigma$-flags is stated, it is
implicitly assumed that~$\sigma$ is nondegenerate.


\section{Density}

The definition of density given in the introduction can be extended to
$\sigma$-flags as follows.  We say that $\sigma$-flags~$F_1$,
\dots,~$F_t$ {\it fit\/} in a $\sigma$-flag~$G$ if
\[
  |G|-|\sigma| \geq (|F_1|-|\sigma|) + \cdots + (|F_t|-|\sigma|).
\]
Let~$F_1$, \dots,~$F_t$ and~$(G, \theta)$ be $\sigma$-flags such
that~$F_1$, \dots,~$F_t$ fit in~$G$. Consider the following
experiment: \textred{choose} pairwise-disjoint sets~$U_1$,
\dots,~$U_t \subseteq V(G) \setminus \im\theta$ of unlabeled vertices
of~$G$ with~$|U_i| = |F_i|-|\sigma|$ uniformly at random.
Let~$p(F_1, \ldots, F_t; G)$ be the probability that the $\sigma$-flag
$(G[U_i \cup \im\theta], \theta)$ is isomorphic to~$F_i$ for~$i = 1$,
\dots,~$t$. This is the {\it density\/} of~$F_1$, \dots,~$F_t$
in~$G$. For $\nulltype$-flags and~$t=1$, this definition coincides
with the usual notion of density for graphs. In the introduction we
also extended the definition of density to graphs with one labeled
vertex; this corresponds to taking~$t = 1$ and the only type of size~1
as~$\sigma$.

Say~$|F| \leq n \leq |G|$. To embed~$F$ into~$G$, we may first try to
embed~$F$ into a $\sigma$-flag~$F'$ of size~$n$ and then embed~$F'$
into~$G$. This gives us another way to compute~$p(F; G)$:
\begin{equation}
p(F; G) = \sum_{F' \in \Fcal_n^\sigma} p(F; F') p(F'; G).
\label{eq:chain-rule}
\end{equation}
This identity can be generalized, giving us the \textit{chain rule}:

\begin{theorem}
If $F_1$, \dots,~$F_t$, and~$G$ are $\sigma$-flags such that~$F_1$,
\dots,~$F_t$ fit in~$G$, then for every~$1 \leq s \leq t$ and
every~$n$ such that~$F_1$, \dots,~$F_s$ fit in a $\sigma$-flag of
size~$n$ and a $\sigma$-flag of size~$n$ together with~$F_{s+1}$,
\dots,~$F_t$ fit in~$G$, the identity
\[
p(F_1, \ldots, F_t; G) = \sum_{F \in \Fcal_n^\sigma} p(F_1, \ldots,
F_s; F) p(F, F_{s+1}, \ldots, F_t; G)
\]
holds.
\end{theorem}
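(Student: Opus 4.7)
The plan is to prove the chain rule by introducing an auxiliary two-step sampling of disjoint vertex subsets of~$G$ and conditioning on the isomorphism type of an intermediate flag.

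First I would set up the following two-step procedure for producing disjoint subsets $U_1, \ldots, U_t \subseteq V(G) \setminus \im\theta$ with $|U_i| = |F_i| - |\sigma|$: in \emph{Step~1}, sample pairwise-disjoint sets $U' \subseteq V(G) \setminus \im\theta$ with $|U'| = n - |\sigma|$ together with $U_{s+1}, \ldots, U_t \subseteq V(G) \setminus \im\theta$ with $|U_i| = |F_i| - |\sigma|$, uniformly; in \emph{Step~2}, given $U'$, sample pairwise-disjoint $U_1, \ldots, U_s \subseteq U'$ with $|U_i| = |F_i| - |\sigma|$, uniformly. Both steps are well-defined by the hypotheses on~$n$. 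A counting check --- for any fixed disjoint tuple $(U_1, \ldots, U_t)$, the number of admissible $U'$ containing $U_1 \cup \cdots \cup U_s$ and disjoint from $U_{s+1}, \ldots, U_t$ and $\im\theta$ depends only on $|G|$, $|\sigma|$, $n$, and the $|F_i|$ --- shows that the marginal distribution of $(U_1, \ldots, U_t)$ under this procedure is precisely the uniform distribution used to define $p(F_1, \ldots, F_t; G)$.

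Next, for each $F \in \Fcal_n^\sigma$, I would partition the event of interest according to whether the intermediate $\sigma$-flag $(G[U' \cup \im\theta], \theta)$ is isomorphic to~$F$. Writing $A_i$ for the event $(G[U_i \cup \im\theta], \theta) \simeq F_i$ and $X_F$ for the intermediate-isomorphism event, the joint event $A_{s+1} \cap \cdots \cap A_t \cap X_F$ is determined by Step~1, and its probability equals $p(F, F_{s+1}, \ldots, F_t; G)$ by the definition of density applied to the tuple $(U', U_{s+1}, \ldots, U_t)$. Conditional on Step~1 with $X_F$ holding, Step~2 samples disjoint $U_1, \ldots, U_s$ uniformly inside $U'$, and any label-preserving isomorphism $(G[U' \cup \im\theta], \theta) \simeq F$ transports this to a uniform disjoint tuple in the unlabeled part of~$F$, preserving induced sub-flags; hence the conditional probability of $A_1 \cap \cdots \cap A_s$ is exactly $p(F_1, \ldots, F_s; F)$. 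Multiplying the two factors and summing over $F \in \Fcal_n^\sigma$ yields
\[
p(F_1, \ldots, F_t; G) = \sum_{F \in \Fcal_n^\sigma} p(F_1, \ldots, F_s; F) \, p(F, F_{s+1}, \ldots, F_t; G).
\]

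The main obstacle is bookkeeping rather than insight: verifying that the marginal of the two-step procedure genuinely coincides with uniform sampling over disjoint tuples, and that transporting Step~2 across a label-preserving isomorphism still yields a uniform disjoint tuple in~$F$ (independently of the choice of isomorphism). Both reduce to the observation that the relevant normalizing counts depend only on the sizes of the flags involved and so cancel out.
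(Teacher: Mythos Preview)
The paper states this theorem without proof (it is attributed to Razborov~\cite{Razborov2007} and used as a building block), so there is no in-paper argument to compare against. Your proposal is correct and is essentially the standard proof: the two-step sampling with an intermediate set~$U'$ of size~$n-|\sigma|$, the symmetry count showing the marginal on $(U_1,\ldots,U_t)$ is uniform, and the conditioning on the isomorphism type of $(G[U'\cup\im\theta],\theta)$ are exactly the ingredients needed, and your bookkeeping remarks identify the only points where care is required.
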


Recall from the introduction that
$p(\btwoclaw; G^v) \to p(\bedge; G^v)^2$ as $|G| \to \infty$. The
argument to see this can be rephrased in two steps as follows. First,
since~$G$ is triangle-free,
then~$p(\btwoclaw; G^v) = p(\bedge, \bedge; G^v)$. This can be seen
directly, but is also a consequence of the chain rule. Indeed,
let~$\Hcal = \{\triangleflag\}$ and let~$\bullet$ denote the only type
of size~1. Then $\bullet$-flags $\bedge$, $\bedge$ fit in a
$\bullet$-flag of size~3.
Since~$\Fcal_3^\bullet = \{ \bempty, \bonezero, \boneone, \btwoclaw,
\btwoone \}$, the chain rule gives
\begin{equation}
\label{eq:prod-vague}
p(\bedge, \bedge; G) = \sum_{F' \in \Fcal_3^\bullet} p(\bedge, \bedge;
F') p(F'; G) = p(\btwoclaw; G).
\end{equation}
Second,~$p(\bedge, \bedge; G^v) \to p(\bedge; G^v)^2$ as~$|G| \to
\infty$, that is, density exhibits multiplicative
behavior in the limit:

\begin{theorem}
\label{thm:dens-mult}
If~$F_1$, $F_2$ are fixed $\sigma$-flags, then there exists a
function~$f(n) = O(1/n)$ such that if~$F_1$, $F_2$ fit in a
$\sigma$-flag~$G$, then~$|p(F_1, F_2; G) - p(F_1; G) p(F_2; G)| \leq
f(|G|)$.
\end{theorem}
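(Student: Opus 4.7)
The plan is to couple the experiment defining $p(F_1, F_2; G)$ with the product experiment defining $p(F_1; G)\,p(F_2; G)$, and to show that the two disagree only on a collision event whose probability is $O(1/|G|)$.

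Set $k = |\sigma|$, $a_i = |F_i| - k$ for $i = 1, 2$, and $n = |G|$. Work on the probability space where subsets $U_1, U_2 \subseteq V(G) \setminus \im\theta$ with $|U_i| = a_i$ are chosen \emph{independently} and uniformly at random. Let $A_i$ be the event that $(G[U_i \cup \im\theta], \theta) \simeq F_i$ and let $B$ be the event $U_1 \cap U_2 = \emptyset$. By independence, $\Pr[A_1 \cap A_2] = p(F_1; G)\,p(F_2; G)$. A short symmetry argument shows that, conditioned on $B$, the pair $(U_1, U_2)$ is uniformly distributed over ordered pairs of disjoint subsets of the prescribed sizes, which is exactly the distribution used to define $p(F_1, F_2; G)$; hence $\Pr[A_1 \cap A_2 \mid B] = p(F_1, F_2; G)$.

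The two quantities may then be compared via the identity
\[
\Pr[A_1 \cap A_2 \mid B] - \Pr[A_1 \cap A_2] = \frac{\Pr[A_1 \cap A_2]\,\Pr[B^c] - \Pr[(A_1 \cap A_2) \cap B^c]}{\Pr[B]},
\]
whose right-hand side is bounded in absolute value by $\Pr[B^c] / \Pr[B]$. A direct count gives
\[
\Pr[B^c] = 1 - \prod_{j=0}^{a_2 - 1} \frac{n - k - a_1 - j}{n - k - j} = O(1/n),
\]
with a constant depending only on the fixed parameters $a_1, a_2, k$, while $\Pr[B] \to 1$. This yields the desired $f(n) = O(1/n)$ for all sufficiently large $n$, and the finitely many smaller values of $n$ (for which $F_1, F_2$ still fit in some $G$ of size $n$) are absorbed into the constant via the trivial estimate $|p(F_1, F_2; G) - p(F_1; G)\,p(F_2; G)| \leq 1$. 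The only mildly delicate step is the coupling itself---verifying that conditioning the independent-sample experiment on disjointness produces precisely the measure from the definition of $p(F_1, F_2; G)$---after which the rest of the argument is a one-line probabilistic estimate.
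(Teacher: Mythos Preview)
Your proof is correct and follows precisely the approach the paper itself suggests: the paper does not give a formal proof of this theorem, but in the introduction it explains the intuition that ``choosing a subset of~$V(G) \setminus \{v\}$ of size~$2$ uniformly at random is basically the same as choosing two vertices in~$V(G) \setminus \{v\}$ independently --- the probability of choosing the same vertex twice becomes negligible as~$|G|$ grows larger.'' Your coupling argument is exactly the formalization of this idea, and the estimate $\Pr[B^c] = O(1/n)$ together with the conditioning identity is the standard way to make it rigorous; there is nothing to add.
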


Identity~\eqref{eq:prod-vague}, that comes from an application of the
chain rule, suggests that there is a relation between the
pair~$(\bedge, \bedge)$ and~$\btwoclaw$. In the next section, we will
use the chain rule to define a product operation on $\sigma$-flags,
and under this product it will hold that~$\bedge \cdot \bedge =
\btwoclaw$. This product will also commute with the density function
in the limit: for $\sigma$-flags~$F_1$ and~$F_2$ we will have~$p(F_1
\cdot F_2; G) \to p(F_1; G) p(F_2; G)$ as~$|G| \to \infty$.


\section{Flag algebras}

In the introduction, we derived the constraint
\begin{align*}
  \phi(\oneedge) + 2 \phi(\twoclaw) &= 3\phi(\edge),
\end{align*}
valid for every $\phi \in \Phi$. If we see~$\phi \in [0,1]^\Gcal$ as a
vector, then this is a linear constraint on the components of~$\phi$.
To enable the use of tools from optimization, mainly duality, we need
to embed our domain into a vector space. We do so by extending $\phi$
linearly to the space~$\R\Gcal$ of formal real linear combinations of
graphs in~$\Gcal$. We could then rewrite the latter constraint as
\[
  \phi(\oneedge+ 2 \twoclaw) =\phi(3\edge),\qquad\text{or even}\qquad
  \phi(\oneedge+ 2 \twoclaw -3\edge) = 0.
\]

One of our main goals is to characterize the linear functionals
on~$\R\Gcal$ that are limit functionals. Instead of describing all the
constraints that characterize limit functionals, it is convenient to
encode some of them algebraically, that is, by modifying the algebraic
structure of~$\R\Gcal$.  The resulting algebraic object will be the
flag algebra, which we construct now for the more general case of
$\sigma$-flags.

Let~$\R\Fcal^\sigma$ be the free vector space over the reals generated
by all $\sigma$-flags, i.e., $\R\Fcal^\sigma$ is the space of all
formal real linear combinations of $\sigma$-flags.
Let~$(A_k)_{k \geq 0}$ be a convergent sequence in $\Fcal^{\sigma}$
and let
\[
\phi(F) = \lim_{k\to\infty} p(F; A_k)
\]
be the pointwise limit of the functions~$p(\cdotwc ; A_k)$.
Extend~$\phi$ linearly to~$\R\Fcal^\sigma$, obtaining a linear
functional. We say that~$\phi$ is the {\it limit functional\/} of the
convergent sequence~$(A_k)_{k \geq 0}$ or, when the sequence itself is
not relevant, that it is a {\it limit functional}.

For any limit functional $\phi$, the chain rule in its
form~\eqref{eq:chain-rule} implies that for every $\sigma$-flag~$F$
and~$n \geq |F|$ we have
\[
\phi(F) = \phi\biggl(\sum_{F' \in \Fcal_n^\sigma} p(F; F') F'\biggr),
\]
that is,
\begin{equation}
F - \sum_{F' \in \Fcal_n^\sigma} p(F; F') F'
\label{eq:kernel-vectors}
\end{equation}
is in the kernel of $\phi$. Instead of enforcing these infinitely many
relations, we might as well just quotient them out. So
let~$\Kcal^\sigma$ be the linear span of vectors of
form~\eqref{eq:kernel-vectors} and
define~$\Acal^\sigma = \R\Fcal^\sigma / \Kcal^\sigma$. This is a
nontrivial vector space, since for every $\sigma$-flag~$F$ we
have~$p(\sigma; F) = 1$, and hence~$\sigma$ is itself not
in~$\Kcal^\sigma$. Since~$\Kcal^\sigma$ is contained in the kernel of
every limit functional, every limit functional is also a linear
functional of~$\Acal^\sigma$.

The main advantage of working with~$\Acal^\sigma$ instead
of~$\R\Fcal^\sigma$ is that it is possible to define a product
on~$\Acal^\sigma$, turning it into an algebra. This product will
conveniently encode the asymptotic multiplicative behavior of
densities described in Theorem~\ref{thm:dens-mult}: for every limit
functional~$\phi$ and~$f$, $g \in \Acal^\sigma$ we will
have~$\phi(f \cdot g) = \phi(f) \phi(g)$.

For $\sigma$-flags~$F$ and~$G$, let~$n$ be any integer
such that~$F$, $G$ fit in a $\sigma$-flag of size~$n$ and set
\begin{equation}
\label{eq:prod-def}
F \cdot G = \biggl(\sum_{H \in \Fcal_n^\sigma} p(F, G; H) H\biggr) +
\Kcal^\sigma.
\end{equation}
This defines a function from~$\Fcal^\sigma \times \Fcal^\sigma$
to~$\Acal^\sigma$ and one may show that the definition is independent
of the choice of~$n$ for each pair~$(F, G)$ of $\sigma$-flags. Now,
extend this function bilinearly
to~$\R\Fcal^\sigma \times \R\Fcal^\sigma$. It is possible to prove
that if~$f \in \Kcal^\sigma$ and~$g \in \R\Fcal^\sigma$,
then~$f \cdot g = \Kcal^\sigma$, whence the bilinear extension is
constant on cosets, and therefore defines a symmetric bilinear form
on~$\Acal^\sigma$, that is, a commutative product.

This turns~$\Acal^\sigma$ into an algebra, the \textit{flag algebra of
  type~$\sigma$}. The product on~$\Acal^\sigma$ is now defined, and we
will use henceforth the natural
correspondence~$f \mapsto f + \Kcal^\sigma$ between~$\R\Fcal^\sigma$
and~$\Acal^\sigma$ without further notice, i.e., we will
omit~$\Kcal^\sigma$ and write~$f$ instead of~$f + \Kcal^\sigma$ for an
element of~$\Acal^\sigma$. Sometimes, namely in~\S\ref{sec:sdp}, it is
important to work with explicit representatives of each coset; in such
cases we will clearly distinguish between cosets and their
representatives.

Under the product just defined for~$\Acal^\sigma$, the type~$\sigma$,
taken as a $\sigma$-flag, is the identity element. The
identity~$\sigma$ can be decomposed in many different ways using
relations~\eqref{eq:kernel-vectors}. Indeed, for
any~$n \geq |\sigma|$, we have
\[
\sigma = \sum_{F \in \Fcal^\sigma_n} p(\sigma; F) F = \sum_{F \in
  \Fcal^\sigma_n} F.
\]

It now follows from Theorem~\ref{thm:dens-mult} that limit functionals
are multiplicative, i.e.,
\[
\phi(f \cdot g) = \phi(f) \cdot \phi(g)
\]
for~$f$, $g \in \Acal^\sigma$.  Since by
construction~$\phi(\sigma) = 1$, every limit functional~$\phi$ is an
algebra homomorphism between~$\Acal^\sigma$ and~$\R$. We denote the
set of all algebra homomorphisms between~$\Acal^\sigma$ and~$\R$
by~$\Hom(\Acal^\sigma, \R)$.

As an example, recall the discussion at the end of the previous
section. When $\Hcal = \{\triangleflag\}$, if we expand the
product $\bedge \cdot \bedge$ as a linear combination of
$\bullet$-flags of size~3, then~$\bedge \cdot \bedge = \btwoclaw$.
Hence every limit functional~$\phi$
satisfies~$\phi(\btwoclaw) = \phi(\bedge \cdot \bedge) =
\phi(\bedge)^2$.

Every limit functional~$\phi$ lies in~$\Hom(\Acal^\sigma, \R)$.
Another obvious constraint that every limit functional $\phi$ must
satisfy is $\phi(F) \geq 0$ for every $\sigma$-flag~$F$, which is not
necessarily true of all homomorphisms.
Call~$\phi \in \Hom(\Acal^\sigma, \R)$ {\it positive\/}
if~$\phi(F) \geq 0$ for every $\sigma$-flag~$F$, and let
$\Hom^+(\Acal^\sigma, \R)$ denote the set of all positive
homomorphisms.

It turns out that these are all the essential properties of a limit
functional. It is clear that every limit functional is a positive
homomorphism. The following theorem of Razborov~\cite{Razborov2007}
establishes the converse, and so positive homomorphisms are precisely
the limit objects of convergent sequences of flags. In particular, the
linear extension of the set~$\Phi$ is precisely
$\Hom^+(\Acal^\nulltype, \R)$.

\begin{theorem}
\label{thm:lovasz-szegedy}
Every limit functional is a positive homomorphism and every positive
homomorphism is a limit functional.
\end{theorem}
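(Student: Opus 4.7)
The plan is to prove the two implications separately; the forward direction is routine, and the converse is the content.

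For the forward direction, let $\phi$ be the limit functional of a convergent sequence $(A_k)_{k \ge 0}$ of $\sigma$-flags. Positivity is immediate from $p(F; A_k) \ge 0$. To see that $\phi$ is an algebra homomorphism, I would fix $\sigma$-flags $F, G$ and an integer $n$ as in~\eqref{eq:prod-def}, then write
\[
\phi(F \cdot G) = \sum_{H \in \Fcal_n^\sigma} p(F, G; H)\,\phi(H) = \lim_{k \to \infty} p(F, G; A_k),
\]
and finally apply Theorem~\ref{thm:dens-mult} to turn the right-hand side into $\lim_{k \to \infty} p(F; A_k)\, p(G; A_k) = \phi(F)\phi(G)$. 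The identity $\phi(\sigma)=1$ is automatic from $p(\sigma;A_k)=1$.

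For the converse, given a positive homomorphism $\phi$, I would construct a random convergent sequence whose limit functional is $\phi$ and then derandomize. For each large $n$, sample $A_n \in \Fcal_n^\sigma$ from the distribution $P_n(F) = \phi(F)$. This is a bona fide probability distribution: positivity gives $P_n(F) \ge 0$, while the identity $\sigma = \sum_{F \in \Fcal_n^\sigma} F$ in $\Acal^\sigma$ combined with $\phi(\sigma)=1$ gives $\sum_F P_n(F) = 1$. For any fixed $\sigma$-flag $F$ and every $n$ large enough that $F$ fits in a $\sigma$-flag of size $n$, the chain rule relation~\eqref{eq:chain-rule} applied under $\phi$ gives
\[
\expec p(F; A_n) = \sum_{A \in \Fcal_n^\sigma} \phi(A)\, p(F; A) = \phi\biggl(\sum_{A \in \Fcal_n^\sigma} p(F; A)\, A\biggr) = \phi(F).
\]
For the second moment I would use Theorem~\ref{thm:dens-mult} to replace $p(F; A)^2$ by $p(F, F; A)$ at a cost of $O(1/n)$; the definition~\eqref{eq:prod-def} of the flag product then yields $\sum_A \phi(A)\, p(F, F; A) = \phi(F \cdot F) = \phi(F)^2$ by multiplicativity of $\phi$. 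Hence the variance of $p(F; A_n)$ is $O(1/n)$, and Chebyshev's inequality yields $p(F; A_n) \to \phi(F)$ in probability.

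To finish I would derandomize simultaneously for all $F$. Since $\Fcal^\sigma$ is countable, enumerate its elements as $F_1, F_2, \ldots$ and apply a union bound over the first $k(n)$ of them (with $k(n) \to \infty$ sufficiently slowly) to pick, for each $n$, a deterministic $A_n \in \Fcal_n^\sigma$ satisfying $|p(F_i; A_n) - \phi(F_i)| \le 1/n$ for all $i \le k(n)$; the resulting sequence is then convergent with limit functional $\phi$. The main obstacle will be the variance computation: the $O(1/n)$ error in Theorem~\ref{thm:dens-mult} is exactly what matches the algebraic identity $\phi(F \cdot F) = \phi(F)^2$, so everything hinges on the asymptotic compatibility between the flag-algebra product and density. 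A secondary subtlety is that the same $n$ must be used both as the size of the sampled flag and as the size parameter in the product definition~\eqref{eq:prod-def}, which forces $F$ and $F$ to fit in a $\sigma$-flag of size $n$ and thus restricts attention to sufficiently large $n$.
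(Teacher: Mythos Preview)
The paper does not actually prove this theorem: the forward direction is argued informally in the paragraphs preceding the statement (exactly as you do, via Theorem~\ref{thm:dens-mult} and positivity of densities), while the converse is simply attributed to Razborov~\cite{Razborov2007} with no proof supplied. So there is nothing in the paper to compare your converse argument against.

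That said, your plan for the converse is essentially the standard proof from Razborov's original paper: sample $A_n \in \Fcal_n^\sigma$ with probability $\phi(A_n)$, show $\expec[p(F;A_n)] = \phi(F)$ via the chain rule, control the variance using $\phi(F\cdot F)=\phi(F)^2$ and Theorem~\ref{thm:dens-mult}, and derandomize by a union bound over an expanding finite list of flags. The idea is sound.

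One quantitative slip: with variance $O(1/n)$, Chebyshev only gives $|p(F;A_n)-\phi(F)| \le \epsilon$ with failure probability $O(1/(n\epsilon^2))$, so asking for accuracy $1/n$ yields a useless bound. Take instead a threshold like $\epsilon_n = n^{-1/3}$; then each failure probability is $O(n^{-1/3})$, and choosing $k(n)\to\infty$ slowly enough (accounting for the flag-dependent constants hidden in the $O(1/n)$ of Theorem~\ref{thm:dens-mult}) makes the union bound strictly less than~$1$. With that adjustment the derandomization goes through and the sequence $(A_n)$ you extract is convergent with limit functional~$\phi$.
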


Finally, notice that types and flags are defined in terms of the
family~$\Hcal$ of forbidden subgraphs, so this family is encoded in
the construction of the flag algebra~$\Acal^\sigma$ itself.

\section{Downward operator}
\label{sec:downward}

We are really interested in working with $\nulltype$-flags, that is,
unlabeled graphs, so why consider other types altogether? Most times,
in order to obtain results for $\nulltype$-flags, it is necessary to
use other types.  In the introduction, to obtain Mantel's theorem, it
was not enough to work with unlabeled graphs: at some point, we had to
introduce labeled graphs, namely to get~\eqref{eq:intro_label}.

The downward operator maps $\sigma$-flags into $\nulltype$-flags, in
such a way that we can derive valid inequalities for densities of
$\nulltype$-flags from valid inequalities for densities of
$\sigma$-flags. If types can be seen as a form of lifting, then the
downward operator is a projection back to our space of interest.

If~$F$ is a $\sigma$-flag, then~$\forget{F}$ is the $\nulltype$-flag
obtained from~$F$ simply by forgetting the embedding, that is, by
forgetting the vertex labels. For a~$\sigma$-flag~$F$,
let~$q_\sigma(F)$ be the probability that an injective
map~$\theta\colon [k] \to V(F)$ taken uniformly at random is such
that~$(\forget{F}, \theta)$ is a $\sigma$-flag isomorphic to~$F$ and
set
\[
\downop{F}_\sigma = q_\sigma(F) \forget{F},
\]
then extend~$\downop{\cdotwc}_\sigma$ linearly to~$\R\Fcal^\sigma$ to
obtain a linear map from~$\R\Fcal^\sigma$ to~$\R\Fcal^\nulltype$.  One
key property of this map is
that~$\downop{\Kcal^\sigma}_\sigma \subseteq \Kcal^\nulltype$, and
hence~$\downop{\cdotwc}_\sigma$ gives a linear map from~$\Acal^\sigma$
to~$\Acal^\nulltype$, which we call {\it downward operator}.  The main
tool used in the proof of this result is the following lemma, which
relates densities in the labeled and in the unlabeled cases by taking
an average.

\begin{lemma}
\label{lem:downward-exp}
Let~$F$ be a $\sigma$-flag and~$G$ be an $\nulltype$-flag
with~$|G| \geq |F|$ and~$p(\forget{\sigma}; G) > 0$. If~$\theta$ is an
embedding of~$\sigma$ into~$G$ chosen uniformly at random,
then~$p(F; (G, \theta))$ is a random variable and
\[
\expec[p(F; (G, \theta))] = {q_\sigma(F) p(\forget{F}; G)\over
  q_\sigma(\sigma) p(\forget{\sigma}; G)}.
\]
\end{lemma}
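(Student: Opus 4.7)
The plan is to prove the lemma by a double-counting argument. Set $k = |\sigma|$, $n = |F|$, and $N = |G|$, and let $E$ denote the number of embeddings of $\sigma$ into $G$; since $p(\forget{\sigma}; G) > 0$, we have $E > 0$, so the distribution of $\theta$ is well-defined.

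First I unwind the definitions. Since $\theta$ is uniform over the $E$ embeddings of $\sigma$ into $G$ and, for each fixed $\theta$, $p(F;(G,\theta))$ counts the fraction of $(n-k)$-subsets $U \subseteq V(G)\setminus\im\theta$ with $(G[U\cup\im\theta],\theta)\simeq F$,
\[
\expec[p(F;(G,\theta))] = \frac{1}{E}\sum_{\theta} p(F;(G,\theta)) = \frac{M}{E\binom{N-k}{n-k}},
\]
where $M$ is the number of pairs $(\theta, U)$ with $\theta$ an embedding of $\sigma$ into $G$, $U$ an $(n-k)$-subset of $V(G)\setminus\im\theta$, and $(G[U\cup\im\theta],\theta)\simeq F$ as $\sigma$-flags.

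The key step is to re-count $M$ by grouping pairs according to the $n$-set $W = U\cup\im\theta$, thereby enumerating pairs $(W,\theta)$ with $W\in\binom{V(G)}{n}$ and $\theta\colon[k]\to W$ an injective map satisfying $(G[W],\theta)\simeq F$. For each fixed $W$, this fiber is empty unless $G[W]\simeq\forget{F}$; transferring across any such isomorphism, the number of valid $\theta$'s equals the number of injective maps $[k]\to V(\forget{F})$ inducing a $\sigma$-flag isomorphic to $F$, which is $q_\sigma(F)\cdot n!/(n-k)!$ by definition of $q_\sigma(F)$. Since there are $p(\forget{F};G)\binom{N}{n}$ choices of $W$ with $G[W]\simeq\forget{F}$, I obtain
\[
M = p(\forget{F};G)\binom{N}{n}\, q_\sigma(F)\,\frac{n!}{(n-k)!}.
\]

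The remainder is bookkeeping. The identity $\binom{N}{n}\frac{n!}{(n-k)!}\big/\binom{N-k}{n-k} = N!/(N-k)!$ simplifies $M/\binom{N-k}{n-k}$ to $q_\sigma(F)\,p(\forget{F};G)\cdot N!/(N-k)!$. Running the very same counting argument with $F$ replaced by $\sigma$ (so $n = k$ forces $U = \emptyset$ and we are simply counting embeddings of $\sigma$ into $G$) yields $E = q_\sigma(\sigma)\,p(\forget{\sigma};G)\cdot N!/(N-k)!$. Dividing gives the formula in the lemma. The main obstacle is pedestrian rather than conceptual: one must verify that $(\theta,U)\leftrightarrow(W,\theta)$ with $W = U\cup\im\theta$ is a genuine bijection onto the intended set of pairs, and one must carry the binomial factors through without arithmetic slips; once those are in hand, the proof essentially writes itself.
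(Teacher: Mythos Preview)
Your proof is correct. The paper itself states Lemma~\ref{lem:downward-exp} without proof, so there is no argument to compare against; your double-counting of the pairs $(\theta,U)$ via the $n$-sets $W=U\cup\im\theta$, together with the computation of $E$ as the special case $F=\sigma$, is the standard way to establish this identity and all the bookkeeping checks out.
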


Note that equation~\eqref{eq:intro_label} in the introduction follows
trivially from this lemma. Indeed, take~$\sigma = \bullet$ as the type
of size~1 and let $F = \btwoclaw$. Then $\forget{F}=\twoclaw$,
$q_\sigma(F) = 1/3$, $q_\sigma(\sigma)=1$ and
$p(\forget{\sigma}; G)=1$ for any graph $G$. Thus, by
Lemma~\ref{lem:downward-exp},
\begin{equation*}
 \frac{1}{|G|}\sum_{v\in V(G)}p(\btwoclaw; G^v)
=
\expec[p(F; (G, \theta))]
=
    {q_\sigma(F) p(\forget{F}; G)\over
  q_\sigma(\sigma) p(\forget{\sigma}; G)}
=
    \frac{1}{3} p(\twoclaw; G).
\end{equation*}


\section{Conic programming}

For~$f \in \Acal^\sigma$ and a linear functional~$\phi$ in the dual
space $(\Acal^\sigma)^*$ of $\Acal^\sigma$, write~$(\phi, f) = \phi(f)$. The
\textit{semantic cone} of type~$\sigma$ is the set
\[
\Scal^\sigma = \{\, f \in \Acal^\sigma : \text{$(\phi, f) \geq 0$ for
  all~$\phi \in \Hom^+(\Acal^\sigma, \R)$}\,\}.
\]
This is a convex cone and its dual cone
\[
(\Scal^\sigma)^* = \{\, \phi \in (\Acal^\sigma)^* : \text{$(\phi, f)
  \geq 0$ for all~$f \in \Scal^\sigma$}\,\}
\]
contains every nonnegative multiple of functionals
in~$\Hom^+(\Acal^\sigma, \R)$. So, given a graph~$C$,
\begin{equation}
\label{eq:flag-primal}
\max\{\, (\phi, C) : \phi \in \Hom^+(\Acal^\nulltype, \R)\, \}
\leq \max\{\, (\phi, C) : \text{$\phi \in (\Scal^\nulltype)^*$
  and~$(\phi, \nulltype) = 1$}\,\}.
\end{equation}
(Here we may write ``max'' instead of ``sup'' because
$\Hom^+(\Acal^\nulltype, \R)$ is compact. Actually, equality holds by
the bipolar theorem.)

The optimization problem on the right-hand side above is a
\textit{conic programming problem}. It asks us to maximize a linear
function~$\phi \mapsto (\phi, C)$ over the intersection of a cone,
namely~$(\Scal^\nulltype)^*$, and an affine subspace, in our case
determined by the linear equation~$(\phi, \nulltype) = 1$.

This conic programming problem has a \textit{dual problem}, namely
\begin{equation}
\label{eq:flag-dual}
\min\{\, \lambda : \text{$\lambda\nulltype - C \in \Scal^\nulltype$
  and~$\lambda \in \R$}\,\},
\end{equation}
where the optimization variable is~$\lambda$. (We may write ``min''
instead of ``inf'' because the feasible region is a closed half-line
in~$\R$.)

Weak duality holds: any feasible solution of the dual has larger or
equal objective value than any feasible solution of the
primal. Indeed, if~$\phi \in (\Scal^\nulltype)^*$ is such
that~$(\phi, \nulltype) = 1$ and~$\lambda \in \R$ is such
that~$\lambda\nulltype - C \in \Scal^\nulltype$, then
\[
0 \leq (\phi, \lambda\nulltype - C) = \lambda - (\phi, C).
\]
Actually, it is easy to show that there is no duality gap, that is,
that primal and dual have the same optimal value. Even more: the
problem on the left-hand side of~\eqref{eq:flag-primal} has the same
optimal value of the dual problem~\eqref{eq:flag-dual}, and so all
three optimization problems in~\eqref{eq:flag-primal}
and~\eqref{eq:flag-dual} have the same optimal value.  Indeed, notice
that the maximum on the left-hand side of~\eqref{eq:flag-primal} is
equal to
\[
\min\{\, \lambda : \text{$(\phi, C) \leq \lambda$ for
  all~$\phi \in \Hom^+(\Acal^\nulltype, \R)$}\,\}.
\]
Now, $\lambda \geq (\phi, C)$ for all~$\phi \in
\Hom^+(\Acal^\nulltype, \R)$ if and only if~$(\phi, \lambda \nulltype
- C) \geq 0$ for all~$\phi \in \Hom^+(\Acal^\nulltype, \R)$ if and
only if~$\lambda\nulltype - C \in \Scal^\nulltype$, as we wanted.

To find an upper bound for~$\exparam(C, \Hcal)$ we work with the dual
problem~\eqref{eq:flag-dual}. One advantage is that we do not need to
solve this problem to optimality to find an upper bound, since any
feasible solution provides an upper
bound. Solving~\eqref{eq:flag-dual} to optimality is the same as
solving the primal problem to optimality, which is the same as
computing~$\exparam(C, \Hcal)$. 

One way to simplify the dual problem~\eqref{eq:flag-dual} is to
replace~$\Scal^\nulltype$ with a
cone~$\Ccal \subseteq \Scal^\nulltype$ for which it is easier to solve
the resulting problem. Obviously, we still get a valid upper bound. We
seem to have taken a tortuous path since the introduction, where we
stated our goal of finding a relaxation of~$\Phi$, of
which~$\Hom^+(\Acal^\nulltype, \R)$ is the linear extension, but that
is exactly what we achieved, albeit via the dual:
\[
\Hom^+(\Acal^\nulltype, \R) \subseteq \{\, \phi \in
(\Acal^\nulltype)^* : \text{$(\phi, f) \geq 0$ for all~$f \in \Ccal$
  and~$(\phi, \nulltype) = 1$}\,\}.
\]

What are some~$f \in \Acal^\sigma$ that belong to the semantic
cone~$\Scal^\sigma$?  Since a positive homomorphism~$\phi$ is by
definition nonnegative on every $\sigma$-flag~$F$, then any conic
combination of $\sigma$-flags is in the semantic cone. Another class
of vectors in the semantic cone is the class of vectors that are sums
of squares. We say that~$f \in \Acal^\sigma$ is a \textit{sum of
  squares} if there are~$g_1$, \dots,~$g_t \in \Acal^\sigma$ such
that~$f = g_1^2 + \cdots + g_t^2$. Then for any positive
homomorphism~$\phi$ (actually, for any homomorphism) we
have~$\phi(f) = \phi(g_1)^2 + \cdots + \phi(g_t)^2 \geq 0$. The class
of sum-of-squares vectors is particularly interesting because it is
computationally tractable, as we will soon see.  Finally, the
downward operator maps the semantic cone~$\Scal^\sigma$ of
type~$\sigma$ into the semantic cone~$\Scal^\nulltype$ of
type~$\nulltype$:

\begin{theorem}
\label{thm:downop}
The image of~$\Scal^\sigma$ under~$\downop{\cdotwc}_\sigma$ is a
subset of~$\Scal^\nulltype$.
\end{theorem}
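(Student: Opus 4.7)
The plan is to take an arbitrary $\psi \in \Hom^+(\Acal^\nulltype, \R)$ and an arbitrary $f \in \Scal^\sigma$ and prove $\psi(\downop{f}_\sigma) \geq 0$. By Theorem~\ref{thm:lovasz-szegedy}, $\psi$ is the limit functional of some convergent increasing sequence $(G_k)_{k \geq 0}$ of $\Hcal$-free graphs, and in particular $|G_k| \to \infty$. First I would dispose of the degenerate case $\psi(\forget{\sigma}) = 0$: for any $\sigma$-flag $F$, the labeled vertices of $F$ induce a copy of $\forget{\sigma}$ in $\forget{F}$, so $p(\forget{\sigma}; \forget{F}) > 0$, and the chain rule applied with $n = |F|$ gives $p(\forget{\sigma}; G_k) \geq p(\forget{\sigma}; \forget{F})\, p(\forget{F}; G_k)$. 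Taking $k \to \infty$ forces $\psi(\forget{F}) = 0$ and hence $\psi(\downop{F}_\sigma) = q_\sigma(F)\, \psi(\forget{F}) = 0$; by linearity $\psi(\downop{f}_\sigma) = 0$, so the claim holds trivially.

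Now I would assume $\psi(\forget{\sigma}) > 0$, so that $p(\forget{\sigma}; G_k) > 0$ for all sufficiently large $k$ and at least one embedding of $\sigma$ into $G_k$ exists. For each such $k$, sample $\theta_k$ uniformly among these embeddings. Writing $f$ as a finite real combination of $\sigma$-flags, extending $p(\cdotwc; \cdotwc)$ linearly in its first argument, and invoking Lemma~\ref{lem:downward-exp} term by term yields
\[
\expec_{\theta_k}\bigl[p(f; (G_k, \theta_k))\bigr]
= \frac{p(\downop{f}_\sigma; G_k)}{q_\sigma(\sigma)\, p(\forget{\sigma}; G_k)}
\xrightarrow{k \to \infty}
\frac{\psi(\downop{f}_\sigma)}{q_\sigma(\sigma)\, \psi(\forget{\sigma})}.
\]
Since the denominator on the right is a positive constant, it is enough to prove that this limit is nonnegative.

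To finish I would argue by contradiction: suppose $\psi(\downop{f}_\sigma) < 0$ and pick $\epsilon > 0$ smaller than the absolute value of the limit above. Then for all large $k$ the expectation is at most $-\epsilon$, so by a pigeonhole/averaging argument there exists at least one embedding $\theta_k^\ast$ with $p(f; (G_k, \theta_k^\ast)) < -\epsilon$. The resulting sequence of $\sigma$-flags $(G_k, \theta_k^\ast)$ has sizes tending to infinity, and the Tychonoff compactness argument used in the introduction for $\nulltype$-flags carries over verbatim to $\sigma$-flags, since the vector of densities $p(\cdotwc; (G_k, \theta_k^\ast))$ lives in the compact space $[0,1]^{\Fcal^\sigma}$. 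Passing to a convergent subsequence and extending pointwise gives a limit functional on $\Acal^\sigma$, which by Theorem~\ref{thm:lovasz-szegedy} is a positive homomorphism $\phi \in \Hom^+(\Acal^\sigma, \R)$ with $\phi(f) \leq -\epsilon < 0$, contradicting $f \in \Scal^\sigma$. The main subtlety I anticipate is justifying that the averaging-plus-extraction step produces a genuine positive homomorphism on $\Acal^\sigma$, and this is precisely where the characterization provided by Theorem~\ref{thm:lovasz-szegedy} does the essential work.
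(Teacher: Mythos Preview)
The paper states Theorem~\ref{thm:downop} without proof, so there is no argument in the paper to compare against directly. Your proof is correct. The handling of the degenerate case $\psi(\forget{\sigma})=0$ via the chain rule is clean: the inequality $p(\forget{\sigma};G_k)\geq p(\forget{\sigma};\forget{F})\,p(\forget{F};G_k)$ follows by dropping all but one term in~\eqref{eq:chain-rule}, and the strict positivity $p(\forget{\sigma};\forget{F})>0$ is immediate since the labeled part of~$F$ is a copy of~$\sigma$. In the main case, extending Lemma~\ref{lem:downward-exp} linearly to~$f$, extracting $\theta_k^\ast$ by averaging, and passing to a convergent subsequence via Tychonoff are all sound; since $f$ is a \emph{finite} linear combination of $\sigma$-flags, $p(f;\cdotwc)$ passes to the limit along that subsequence and the bound $\phi(f)\leq -\epsilon$ follows.

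The point you flag as the main subtlety is exactly where Theorem~\ref{thm:lovasz-szegedy} earns its keep: that theorem is stated in the paper for arbitrary nondegenerate types, not just~$\nulltype$, so the subsequential limit of the $\sigma$-flags $(G_k,\theta_k^\ast)$ is indeed a positive homomorphism on~$\Acal^\sigma$, and the contradiction with $f\in\Scal^\sigma$ goes through. Your double invocation of Theorem~\ref{thm:lovasz-szegedy}, once in each direction and for each of the two types~$\nulltype$ and~$\sigma$, together with Lemma~\ref{lem:downward-exp} as the computational core, is precisely the natural route; the paper singles out Lemma~\ref{lem:downward-exp} in~\S\ref{sec:downward} as the key tool for working with the downward operator, and your proof bears this out.
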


\noindent
This gives yet another way to obtain vectors in~$\Scal^\nulltype$, by
first considering a type~$\sigma$, then obtaining a vector
in~$\Acal^\sigma$ (a sum-of-squares vector, for instance), and then
using the downward operator.


\section{The semidefinite programming method}
\label{sec:sdp}

Semidefinite programming is conic programming over the cone of
positive semidefinite matrices. Using sum-of-squares vectors
in~$\Acal^\sigma$ and the downward operator, we may define a family of
tractable cones contained in~$\Scal^\nulltype$. Then using
semidefinite programming it is possible to write down optimization
problems that provide upper bounds to~\eqref{eq:flag-dual}. This
approach is known as the \textit{semidefinite programming method}. Its
main advantages are that writing down the semidefinite programming
problems is mostly a mechanical affair, that can even be automated
(and has been; see for instance flagmatic~\cite{RavryV2013}), and
solving the resulting problems can be done with a computer.

There is a well-known relation between sums-of-squares polynomials and
positive semidefinite matrices (see e.g.\ the exposition by
Laurent~\cite{Laurent2008}). We now establish the analogous relation
between sums-of-squares vectors in~$\Acal^\sigma$ and positive
semidefinite matrices. The \textit{degree} of a
vector~$f \in \R\Fcal^\sigma$ is the largest size of a flag appearing
with a nonzero coefficient in the expansion of~$f$; by convention, the
degree of~$0$ is~$-1$. The notion of degree can be extended
to~$\Acal^\sigma$, by setting the degree
of~$f + \Kcal^\sigma \in \Acal^\sigma$ to be the smallest degree of
any~$g \in f + \Kcal^\sigma$. For a type~$\sigma$
and~$n \geq |\sigma|$,
let~$v_{\sigma,n}\colon \Fcal^\sigma_n \to \Acal^\sigma$ be the
canonical embedding, i.e., $v_{\sigma, n}(F) = F$ for
all~$F \in \Fcal_n^\sigma$.

\begin{theorem}
\label{thm:flag-sos}
If~$f \in \Acal^\sigma$ and $n \geq |\sigma|$, then there are
vectors~$g_1$, \dots,~$g_t \in \Acal^\sigma$ for some \(t \geq 1\), each of
degree at most~$n$, such that~$f = g_1^2 + \cdots + g_t^2$ if and only
if there is a positive semidefinite matrix~$Q\colon \Fcal_n^\sigma
\times \Fcal_n^\sigma \to \R$ such that~$f = v_{\sigma,n}^\tp Q
v_{\sigma,n}$.
\end{theorem}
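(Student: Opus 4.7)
The plan is to prove the two implications in parallel, with the bridge being the elementary fact that any positive semidefinite matrix $Q\colon\Fcal_n^\sigma\times\Fcal_n^\sigma\to\R$ decomposes as $Q = \sum_{i=1}^t a_i a_i^\tp$ for some column vectors $a_1,\dots,a_t \in \R^{\Fcal_n^\sigma}$ (e.g., via spectral decomposition), and the observation that the map $a \mapsto a^\tp v_{\sigma,n} = \sum_{F\in\Fcal_n^\sigma} a(F)\,F$ identifies such vectors with elements of $\Acal^\sigma$ that are representable by a linear combination of flags of size exactly $n$.

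For the direction ``positive semidefinite implies sum of squares,'' assume $f = v_{\sigma,n}^\tp Q v_{\sigma,n}$ with $Q$ positive semidefinite, fix a decomposition $Q = \sum_i a_i a_i^\tp$, and set $g_i = a_i^\tp v_{\sigma,n} \in \Acal^\sigma$. Each $g_i$ plainly has degree at most $n$. Using bilinearity of the product on~$\Acal^\sigma$ we get
\[
v_{\sigma,n}^\tp Q v_{\sigma,n} = \sum_{i=1}^t v_{\sigma,n}^\tp a_i a_i^\tp v_{\sigma,n} = \sum_{i=1}^t (a_i^\tp v_{\sigma,n})^2 = \sum_{i=1}^t g_i^2,
\]
so $f = g_1^2+\cdots+g_t^2$ as required.

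For the converse, suppose $f = g_1^2+\cdots+g_t^2$ with each $g_i$ of degree at most $n$. The key step is to represent each~$g_i$ by a linear combination of flags of size \emph{exactly} $n$. By the definition of degree on $\Acal^\sigma$, each $g_i$ has a representative $\tilde g_i \in \R\Fcal^\sigma$ all of whose flags have size at most~$n$. For every flag $F$ of size $m<n$ appearing in $\tilde g_i$, relation~\eqref{eq:kernel-vectors} tells us that $F - \sum_{F'\in\Fcal_n^\sigma} p(F;F')\,F' \in \Kcal^\sigma$, so we may substitute and collect terms to produce a vector $a_i \in \R^{\Fcal_n^\sigma}$ with $g_i = a_i^\tp v_{\sigma,n}$ in~$\Acal^\sigma$. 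Setting $Q = \sum_i a_i a_i^\tp$, which is positive semidefinite as a sum of rank-one positive semidefinite matrices, the same computation as above yields $v_{\sigma,n}^\tp Q v_{\sigma,n} = \sum_i g_i^2 = f$.

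The main obstacle is the degree-lifting step just described: one must verify that an element of $\Acal^\sigma$ of degree at most $n$ really admits a representative supported entirely on $\Fcal_n^\sigma$, which is precisely where the defining relations of $\Kcal^\sigma$ are used. Once this is in place, the manipulations $v_{\sigma,n}^\tp Q v_{\sigma,n} = \sum_i(a_i^\tp v_{\sigma,n})^2$ are justified because the product~\eqref{eq:prod-def} of two flags of size $n$ is independent of the size parameter and passes to cosets, as noted right after its definition; hence the identity $g_i^2 = (a_i^\tp v_{\sigma,n})^2$ in~$\Acal^\sigma$ is unambiguous.
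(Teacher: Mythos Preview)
Your proof is correct and follows essentially the same approach as the paper's own proof: in both directions the argument hinges on the rank-one decomposition $Q=\sum_i a_i a_i^\tp$ of a positive semidefinite matrix and on the degree-lifting step (using the relations~\eqref{eq:kernel-vectors} to represent each $g_i$ by a linear combination of flags of size exactly~$n$), followed by the bilinear identity $v_{\sigma,n}^\tp Q v_{\sigma,n}=\sum_i (a_i^\tp v_{\sigma,n})^2$. The only difference is the order in which you treat the two implications.
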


\begin{proof}
Suppose that there are vectors~$g_1$, \dots,~$g_t$ as
described. Modulo~$\Kcal^\sigma$, every $\sigma$-flag of size~$m$ can
be written as a linear combination of $\sigma$-flags of any fixed size
greater than~$m$. So by hypothesis we can take from each coset~$g_i +
\Kcal^\sigma$ a representative $\hat{g}_i \in \R\Fcal^\sigma$ which is
a linear combination of $\sigma$-flags of size~$n$.

Let~$c_i$ be the vector of coefficients of~$\hat{g}_i$, in such a
way that~$\hat{g}_i = c_i^\tp v_{\sigma,n}$. Then
\[
\hat{g}_1^2 + \cdots + \hat{g}_t^2 = \sum_{i=1}^t (c_i^\tp v_{\sigma,n})^2 = \sum_{i=1}^t
v_{\sigma,n}^\tp c_i c_i^\tp v_{\sigma,n},
\]
and we may take~$Q = c_1 c_1^\tp + \cdots + c_t c_t^\tp$.

For the converse, say there is a positive semidefinite matrix~$Q$ as
described. Then for some~$t$ there are vectors~$c_1$, \dots,~$c_t$
such that~$Q = c_1 c_1^\tp + \cdots + c_t c_t^\tp$.  But then~$g_i =
c_i^\tp v_{\sigma,n}$ has degree at most~$n$
in~$\Acal^\sigma$. Moreover,~$f = g_1^2 + \cdots + g_t^2$, as we
wanted.
\end{proof}

Let us describe the semidefinite programming method by applying it to
Mantel's theorem. Fix~$\Hcal = \{\triangleflag\}$. We have the
following $\nulltype$-flags of sizes~2 and~3:~$\nonedge$, $\edge$,
$\emptythree$, $\oneedge$, and~$\twoclaw$. There is also only one type
of size~1, namely the graph on one vertex, which we denote
by~$\bullet$. These are the $\bullet$-flags of sizes~2
and~3:~$\bnonedge$, $\bedge$, $\bempty$, $\bonezero$, $\boneone$,
$\btwoclaw$, and~$\btwoone$.

Write~$v = v_{\bullet,2}$, so that in vector notation we
have~$v = (\bnonedge, \bedge)$. From Theorem~\ref{thm:flag-sos},
if~$Q\colon\Fcal^\bullet_2 \times \Fcal^\bullet_2 \to \R$ is a
positive semidefinite matrix, then~$v^\tp Q v$ belongs to the semantic
cone~$\Scal^\bullet$ of type~$\bullet$, and hence from
Theorem~\ref{thm:downop} we have that~$\downop{v^\tp Q v}_\bullet$
belongs to the semantic cone~$\Scal^\nulltype$ of
type~$\nulltype$. Since any conic combination~$r$ of $\nulltype$-flags
belongs to the semantic cone~$\Scal^\nulltype$, we have that
\[
r + \downop{v^\tp Q v}_\bullet \in \Scal^\nulltype
\]
for every conic combination~$r$ of $\nulltype$-flags and every positive
semidefinite matrix~$Q$.

So, recalling~\eqref{eq:flag-dual}, any feasible solution of the
following optimization problem gives an upper bound
to~$\exparam(\edge, \{\triangleflag\})$:
\begin{equation}
\label{eq:mantel-first-opt}
\begin{array}{rl}
\min&\lambda\\
&\lambda\nulltype - \edge = r + \downop{v^\tp Q v}_\bullet,\\
&\text{$r$ is a conic combination of $\nulltype$-flags},\\
&\text{$Q\colon \Fcal^\bullet_2 \times \Fcal^\bullet_2 \to \R$ is
  positive semidefinite.}
\end{array}
\end{equation}
This problem is not quite a semidefinite programming problem: the
first identity above is an identity between vectors
in~$\Acal^\nulltype$, not a linear constraint on~$\lambda$ and the
entries of~$Q$. This identity can be translated, however, into several
linear constraints, as follows.

If~$A$ and~$B$ are~$n \times n$ matrices,
write~$\langle A, B\rangle = \tr A^\tp B = \sum_{i,j=1}^n A_{ij}
B_{ij}$. Then
\[
\downop{v^\tp Q v}_\bullet = \downop{\langle v v^\tp, Q
  \rangle}_\bullet = \langle \downop{v v^\tp}_\bullet, Q \rangle.
\]
Here, notice that~$v v^\tp$ is a matrix. The downward operator, when
applied to the matrix~$v v^\tp$, is applied entrywise and yields a
matrix of the same dimensions as the result.

So the first constraint in~\eqref{eq:mantel-first-opt} can be rewritten
as
\begin{equation}
\label{eq:flag-identity}
\lambda\nulltype - \edge = r + \langle \downop{v v^\tp}_\bullet, Q
\rangle,
\end{equation}
which is still an identity between elements of~$\Acal^\nulltype$. To
test the above identity, we may choose a large enough~$N$ and use the
chain rule to expand both left and right-hand sides as linear
combinations of $\nulltype$-flags of size~$N$. If the coefficients
coincide, then equality holds. This is only a sufficient condition
however: for a fixed~$N$, equality may hold in~$\Acal^\nulltype$ even
though the coefficients differ, but it is not hard to show that there
is always some~$N$ for which equality holds if and only if the
coefficients coincide.

To make things precise, we have to choose for~$\edge$, $r$, and every
element of~$\Acal^\nulltype$ in~$v v^\tp$ a representative
in~$\R\Fcal^\nulltype$. As a representative
of~$\edge \in \Acal^\nulltype$ we may
choose $\edge \in \R\Fcal^\nulltype$. For~$v v^\tp$ proceed as
follows: use the definition of product in~$\Acal^\bullet$ to get
\[
v v^\tp = \begin{pmatrix}
\bempty + \bonezero&\frac{1}{2}(\boneone + \btwoone)\\[5pt]
\frac{1}{2}(\boneone + \btwoone)&\btwoclaw
\end{pmatrix}
\]
and then apply the downward operator to get
\[
\downop{v v^\tp}_\bullet = \begin{pmatrix}
\emptythree + \frac{1}{3}\oneedge&\frac{1}{3}(\oneedge + \twoclaw)\\[5pt]
\frac{1}{3}(\oneedge + \twoclaw)&\frac{1}{3}\twoclaw
\end{pmatrix}.
\]
We will deal with~$r$ below in a different way (actually, we will get
rid of it).  Notice we could have chosen different
representatives. For instance, we could have expanded the products
in~$v v^\tp$ using $\bullet$-flags of size~6, say. All that matters,
however, is to choose representatives, and it is usually a good idea to
choose representatives of smallest possible degree.

Now we are working exclusively with representatives
in~$\R\Fcal^\nulltype$. For a given~$N > 0$ and
fixed~$G \in \Fcal_N^\nulltype$, extend~$F \mapsto p(F; G)$ linearly
to~$\Fcal_N^\nulltype$. If for every~$G \in \Fcal^\nulltype_N$ we have
\begin{equation}
\label{eq:linear-constraints}
p(\lambda\nulltype - \edge; G) = p(r; G) + p(\langle \downop{v
  v^\tp}_\bullet, Q\rangle; G),
\end{equation}
then~\eqref{eq:flag-identity} holds. Conversely,
if~\eqref{eq:flag-identity} holds, then for some~$N > 0$
\eqref{eq:linear-constraints} holds for
every~$G \in \Fcal_N^\nulltype$ (this requires a short argument though).

Now, $p(r; G)$ is the coefficient of~$G$ in~$r$; then, since~$r$ is a
conic combination, $p(r; G) \geq 0$ for
every~$G \in \Fcal_N^\nulltype$. Together with linearity this implies
that we may rewrite~\eqref{eq:linear-constraints} equivalently as
\begin{equation}
\label{eq:linear-constraints-rewrite}
\lambda - p(\edge; G) \geq \langle p(\downop{v v^\tp}_\bullet; G),
Q\rangle,
\end{equation}
where~$p(\,\cdot\,; G)$ is applied entrywise to~$v v^\tp$. Notice
that~$p(\edge; G)$ is a number and~$p(\downop{v v^\tp}_\bullet; G)$ is
a matrix of numbers, so for each~$G \in \Fcal_N^\nulltype$ the above
inequality is a linear constraint on~$\lambda$ and the entries of~$Q$.

In our case, we may take~$N = 3$.
Then~\eqref{eq:linear-constraints-rewrite} gives rise to one linear
constraint for each of the $\nulltype$-flags of size~3:
\[
\begin{array}{cl}
\hbox{$\nulltype$-flag}&\multicolumn{1}{c}{\hbox{constraint}}\\[3pt]
\emptythree&\lambda \geq \langle \bigl(\begin{smallmatrix}
1&0\\
0&0
\end{smallmatrix}\bigr), Q\rangle,\\[3pt]
\oneedge&\lambda - 1/3 \geq \langle \bigl(\begin{smallmatrix}
1/3&1/3\\
1/3&0
\end{smallmatrix}\bigr), Q\rangle,\\[3pt]
\twoclaw&\lambda - 2/3 \geq \langle \bigl(\begin{smallmatrix}
0&1/3\\
1/3&1/3
\end{smallmatrix}\bigr), Q\rangle.
\end{array}
\]

In this way we may rewrite problem~\eqref{eq:mantel-first-opt},
obtaining a semidefinite programming problem that gives an upper bound
to the optimal value of~\eqref{eq:mantel-first-opt}, and hence also
to~$\exparam(\edge, \{\triangleflag\})$. This problem is not
necessarily equivalent to~\eqref{eq:mantel-first-opt}, since for a
given~$N$ equality in the algebra may hold even though the linear
constraints are not satisfied.

Now, it is easy to check that~$\lambda = 1/2$
and~$Q =
\frac{1}{2}\bigl(\begin{smallmatrix}1&-1\\-1&1\end{smallmatrix}\bigr)$
form a feasible solution of this semidefinite programming problem (and
hence also of~\eqref{eq:mantel-first-opt}), and so we have Mantel's
theorem.

All the steps of the semidefinite programming method are contained in
the example we worked out above. In general, however, one may choose a
finite set~$\Tcal$ of types instead of only one type and consider the
vectors in~$\Scal^\nulltype$ given by
\[
r + \sum_{\sigma \in \Tcal} \downop{v_{\sigma,n_\sigma}^\tp Q_\sigma
  v_{\sigma,n_\sigma}}_\sigma,
\]
where~$r$ is a conic combination of $\nulltype$-flags,
$n_\sigma \geq |\sigma|$, and each~$Q_\sigma$ is a positive semidefinite
matrix. Choosing more types makes the problem larger, but also
potentially stronger.


\section{Summary}

The theory of flag algebras provides a powerful, unifying approach for
extremal problems involving a host of combinatorial structures. Its
novelty is that it allows the formulation of relaxations for such
problems using conic programming, which can be further relaxed to
semidefinite programming problems, thus enabling the use of a computer
to obtain bounds. Most importantly, the computed bounds are often
tight. Hence, the theory yields relaxations that achieve the desired
trade-off of computational tractability and high-quality bounds.

We have only scratched the surface of the theory of flag algebras.
Many optimization aspects of the semidefinite method, such as the use
of complementary slackness to obtain further constraints on the
optimal solutions for~\eqref{eq:flag-primal}, were left
out. Complementary slackness can be useful to show properties of all
increasing sequences~$(G_k)_{k \geq 0}$ that attain
$\exparam(C, \Hcal)$, an important issue in extremal combinatorics.
Razborov~\cite{Razborov2007} further developed other methods involving
flag algebras, such as the differential method and the inductive
method.

Techniques involving flag algebras have been used to obtain many
significant new results such as: computing the minimal number of
triangles in graphs with given density in~\cite{Razborov2008,PikhurkoR2016}, computing the
maximum number of pentagons in triangle-free graphs
in~\cite{Grzesik2012,HatamiHKN2013}, and obtaining new advances towards the
Cacceta-Häggkvist conjecture in~\cite{Razborov2013}. Besides being
applied in the context of graphs and digraphs, flag algebras have also
been successfully used in the setting of colored graphs
(e.g.~\cite{BaberT2014,CummingsKPSTY2013}) and of permutations
(e.g.~\cite{BaloghHLPUV2014}). For many more references, see the
thesis of Grzesik~\cite{Grzesik2014}.

\end{document}